\newtheorem{prop}{Proposition}[section]
\newtheorem{thm}[prop]{Theorem}
\renewcommand{\geq}{\geqslant}
\def\leq{\leqslant}
\newcommand{\R}{\mathbb{R}}
\def\HH{\EuFrak H}
\def\1{{\mathbf{1}}}
\def\sk{{\mathbb{D}}}
\def\1{{\mathbf{1}}}
\def\0.5{{\frac{1}{2}}}
\newenvironment{proof}[1]{\begin{trivlist}\item {\it
\bf Proof.}\quad} {\qed\end{trivlist}}
\newcommand{\qed}{\nopagebreak\hspace*{\fill}
{\vrule width6pt height6ptdepth0pt}\par}
\title{\bf Strong asymptotic independence\\ on Wiener chaos}
\author{Ivan Nourdin\footnote{Email: {\tt inourdin@gmail.com}; IN was partially supported by the
ANR Grant ANR-10-BLAN-0121.}, David Nualart\footnote{Email: {\tt  nualart@math.ku.edu}; DN was partially supported by the NSF grant  DMS1208625.} \,\,and
Giovanni Peccati\footnote{Email: {\tt giovanni.peccati@gmail.com}; GP was partially supported by the grant F1R-MTH-PUL-12PAMP  (PAMPAS), from Luxembourg University.}}
\begin{document}
\maketitle
\begin{abstract}
Let $F_n = (F_{1,n}, ....,F_{d,n})$, $n\geq 1$, be a sequence of random vectors such that, for every $j=1,...,d$, the
random variable $F_{j,n}$ belongs to a fixed Wiener chaos of a Gaussian field. We show that, as $n\to\infty$,
the components of $F_n$ are asymptotically independent if and only if ${\rm Cov}(F_{i,n}^2,F_{j,n}^2)\to 0$ for every $i\neq j$. Our findings are based on a novel inequality for vectors of multiple Wiener-It\^o integrals, and represent a substantial refining of criteria for asymptotic independence in the sense of moments
recently established by Nourdin and Rosi\'nski \cite{NouRos}.\\
\textbf{Keywords:} Gaussian Fields; Independence; Limit Theorems; Malliavin calculus; Wiener Chaos.\\
 \noindent
 \textbf{2000 Mathematics Subject Classification:}  60F05, 60H07, 60G15.

\end{abstract}

\section{Introduction}

\subsection{Overview}

Let $X = \{X(h) :  h\in \HH\}$ be an isonormal Gaussian process over some real separable Hilbert space $\HH$ (see Section \ref{ss:basic} and Section \ref{s:prelim} for relevant definitions), and let $F_n = (F_{1,n}, ....,F_{d,n})$, $n\geq 1$, be a sequence of random vectors such that, for every $j=1,...,d$, the
random variable $F_{j,n}$ belongs the $q_j$th Wiener chaos of $X$ (the order $q_j\geq 1$ of the chaos being independent of $n$). The following result, proved by Nourdin and Rosi\'nski in \cite[Corollary 3.6]{NouRos}, provides a useful criterion for the asymptotic independence of the components of $F_n$.
\begin{thm}[See \cite{NouRos}]\label{t:intro} Assume that, as $n\to\infty$ and for every $i\neq j$,
\begin{equation}\label{e:scott}
{\rm Cov}(F_{i,n}^2,F_{j,n}^2)\to 0\quad\mbox{and}\quad
F_{j,n}\overset{\rm law}{\to} U_j,
\end{equation}
where each $U_j$ is a {\it moment-determinate}\footnote{Recall
that a random variable $U$ with moments of all orders is said to be {\it moment-determinate} if $E[X^n] = E[U^n]$ for every $n=1,2,...$ implies that $X$ and $U$ have the same distribution.} random variable. Then,
\[
F_{n}\overset{\rm law}{\longrightarrow} (U_1,\ldots,U_d),
\]
where the $U_j$'s are assumed to be mutually stochastically independent.
\end{thm}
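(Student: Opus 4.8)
The statement is a typical instance of the method of moments, and this is how I would organise the proof. Write $F_{j,n}=I_{q_j}(f_{j,n})$, with $f_{j,n}\in\HH^{\otimes q_j}$ symmetric. First a reduction: since $F_{j,n}$ converges in law to the finite random variable $U_j$ and lives in the fixed $q_j$th chaos, a standard normalisation argument together with hypercontractivity on Wiener chaos gives $\sup_n\esp[|F_{j,n}|^p]<\infty$ for every $p\geq1$; in particular $\sup_n\|f_{j,n}\|_{\HH^{\otimes q_j}}<\infty$, the sequence $(F_n)$ is tight, and all joint moments $\esp[F_{1,n}^{k_1}\cdots F_{d,n}^{k_d}]$ are bounded, the families $\prod_j|F_{j,n}|^{k_j}$ being uniformly integrable. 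Because each $U_j$ is moment-determinate, the law of the independent vector $(U_1,\dots,U_d)$ is itself moment-determinate on $\real^d$, so by the vector-valued method of moments it is enough to show that $\esp[F_{1,n}^{k_1}\cdots F_{d,n}^{k_d}]\to\prod_{j=1}^d\esp[U_j^{k_j}]$ for all $k_1,\dots,k_d\geq0$. As the one-dimensional convergences give $\esp[F_{j,n}^{k_j}]\to\esp[U_j^{k_j}]$, this in turn reduces to the \emph{asymptotic factorisation}
\[
\esp\big[F_{1,n}^{k_1}\cdots F_{d,n}^{k_d}\big]-\prod_{j=1}^d\esp\big[F_{j,n}^{k_j}\big]\longrightarrow 0 .
\]

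The second ingredient converts the hypothesis into information on the kernels. Applying the multiplication formula to $F_{i,n}^2$ and $F_{j,n}^2$ and using the isometry, one checks that $\cov(F_{i,n}^2,F_{j,n}^2)$ is a linear combination, with \emph{strictly positive} combinatorial coefficients, of the squared norms $\|f_{i,n}\otimes_r f_{j,n}\|^2_{\HH^{\otimes(q_i+q_j-2r)}}$, $r=1,\dots,q_i\wedge q_j$ (so in particular $\cov(F_{i,n}^2,F_{j,n}^2)\geq0$). Hence the assumption $\cov(F_{i,n}^2,F_{j,n}^2)\to0$ is equivalent to $\|f_{i,n}\otimes_r f_{j,n}\|\to0$ for all $i\neq j$ and all $1\leq r\leq q_i\wedge q_j$.

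For the last and main step I would expand $\esp[\prod_{j,l}I_{q_j}(f_{j,n})]$, a product of $N=\sum_j k_j$ multiple integrals, by the diagram formula: it is a finite sum, indexed by the complete (non-flat) pairings of the $\sum_j q_jk_j$ legs attached to the $N$ factors, of generalised contractions of the kernels. Each pairing induces a graph on the $N$ factors; the pairings whose connected components each involve factors from a single group $j$ contribute, after summation, exactly $\prod_j\esp[F_{j,n}^{k_j}]$. Every remaining pairing has a connected component meeting two distinct groups $i\neq j$; selecting a cross-edge of that component, carrying $r\geq1$ pairings, and reorganising the associated generalised contraction by the Cauchy--Schwarz inequality and the elementary bound $\|a\otimes_r b\|\leq\|a\|\,\|b\|$, one dominates its contribution by a combinatorial constant times one of the cross-contraction norms $\|f_{i,n}\otimes_r f_{j,n}\|$, all remaining kernel norms being uniformly bounded. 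By the second step this tends to $0$, which gives the asymptotic factorisation and hence the theorem.

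The real work sits in this last step: in a high-order joint moment the cross-group pairings can be interleaved arbitrarily with the intra-group ones, so one must argue carefully that every mixed generalised contraction can, after a suitable choice of the order in which the contractions are carried out, be bounded by one of the simple norms $\|f_{i,n}\otimes_r f_{j,n}\|$ times a product of uniformly bounded kernel norms. A secondary and more routine point is the reduction in the first paragraph, which rests on the classical fact that a product of one-dimensional moment-determinate probability measures is moment-determinate on $\real^d$.
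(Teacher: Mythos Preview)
Your sketch is correct and follows precisely the combinatorial route of the original reference \cite{NouRos} from which Theorem~\ref{t:intro} is quoted; the present paper does not re-prove this statement directly but instead establishes the strictly stronger Theorem~\ref{utile}, in which the moment-determinacy hypothesis is dropped altogether. The two arguments are genuinely different. Your approach expands joint moments by the diagram/product formula, separates the pairings whose connected components stay within a single group $j$ (these rebuild $\prod_j\esp[F_{j,n}^{k_j}]$) from the mixed ones, and bounds each mixed generalised contraction by some $\|f_{i,n}\otimes_r f_{j,n}\|$ times uniformly bounded kernel norms; the method of moments then closes the argument, and this is exactly why moment-determinacy of the $U_j$ is indispensable in your setup. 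The paper instead proves the quantitative inequality \eqref{e:mainresult} for \emph{bounded smooth} test functions, via Malliavin integration by parts (writing $\varphi(F)-E[\varphi(F)]=\sum_j\delta\big((I-L)^{-1}\partial_j\varphi(F)\,DF_j\big)$), the iterated Skorohod formula \eqref{t3}, and Meyer's inequalities \eqref{e:meyer}; since the resulting bound involves only $\cov(F_i^2,F_j^2)$ and not moments of the limit, a tightness argument (Section~3.3) suffices and determinacy never enters. In short: your combinatorial argument is the right proof of the theorem as stated and attributed, while the paper's Malliavin-calculus proof of Proposition~\ref{p:main} buys a strictly stronger conclusion at the price of heavier analytic machinery. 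The one place where your write-up is genuinely thin is the domination of an arbitrary mixed diagram by a single cross-contraction norm; you correctly flag this as ``the real work'', and it is precisely the technical core carried out in \cite{NouRos}.
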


In  words, Theorem \ref{t:intro} allows one to deduce joint convergence from the componentwise convergence of the elements of $F_n$, provided the limit law of each sequence $\{F_{j,n}\}$ is moment-determinate and
the covariances between the squares of the distinct components of $F_n$ vanish asymptotically. This result and its generalisations have already led to some important applications, notably in connection with time-series analysis and with the asymptotic theory of homogeneous sums --- see \cite{baitaqqu,bourguinbreton, NouRos}. The aim of this paper is to study the following important question, which was left open in the reference \cite{NouRos}:

\medskip

\noindent{\bf Question A}. {\em In the statement of Theorem \ref{t:intro}, is it possible to remove the moment-determinacy assumption for the random variables $U_1,...,U_d$?}

\medskip

Question A is indeed very natural. For instance, it is a well-known fact (see \cite[Section 3]{slud}) that non-zero random variables living inside a fixed Wiener chaos of order $q\geq 3$ are not necessarily moment-determinate, so that Theorem \ref{t:intro} cannot be applied in several contexts where the limit random variables $U_j$ have a chaotic nature. Until now, such a shortcoming has remarkably restricted the applicability of Theorem \ref{t:intro} --- see for instance the discussion contained in \cite[Section 3]{baitaqqu}.

\smallskip

In what follows, we shall derive several new probabilistic estimates (stated in Section \ref{ss:main} below) for chaotic random variables, leading to a general positive answer to Question A. As opposed to the techniques applied in \cite{NouRos}, our proof does not make use of combinatorial arguments. Instead, we shall heavily rely on the use of Malliavin calculus and Meyer inequalities (see the forthcoming formula \eqref{e:meyer}). This new approach will yield several quantitative extensions of Theorem \ref{t:intro}, each having its own interest. Note that, in particular, our main results immediately confirm Conjecture 3.7 in \cite{baitaqqu}.

\smallskip

The content of the present paper represents a further contribution to a recent and very active direction of research, revolving around the application of Malliavin-type techniques for deriving probabilistic approximations and limit theorems, with special emphasis on normal approximation results (see \cite{nualartpeccati,peccatitudor} for two seminal contributions to the field, as well as \cite{nppams} for recent developments).The reader is referred to the book \cite{nourdinpeccatibook} and the survey  \cite{nourdinsurvey} for an overview of this area of research. One can also consult the constantly updated webpage
\cite{webpage} for literally hundreds of results related to the findings contained in \cite{nualartpeccati,peccatitudor} and their many ramifications.

\subsection{Some basic definitions and notation}\label{ss:basic}

We refer the reader to \cite{nourdinpeccatibook, nualartbook} for any unexplained definition or result.
\smallskip

Let $\EuFrak H$ be a real separable infinite-dimensional Hilbert space. For any integer $q\geq 1$, let $%
\EuFrak H^{\otimes q}$ be the $q$th tensor product of $\EuFrak H$. Also, we denote
by $\EuFrak H^{\odot q}$ the $q$th symmetric tensor product. From now on, the symbol $X=\{X(h) : h\in \EuFrak H\}$ will indicate an isonormal Gaussian process on
$\EuFrak H$, defined on some probability space $(\Omega ,\mathcal{F},P)$. In particular, $X$ is a centered Gaussian family with covariance given by $E[X(h)X(g)]=\langle h,g\rangle_\HH$. We will also assume that $\mathcal{F}$ is generated by $X$.

\smallskip

For every integer $q\geq 1$,  we let $\mathcal{H}_{q}$ be the $q$th {\it Wiener chaos} of $X$,
that is, $\mathcal{H}_{q}$ is the closed linear subspace of $L^{2}(\Omega)$
generated by the class $\{H_{q}(X(h)): h\in \EuFrak H,\left\|
h\right\| _{\EuFrak H}=1\}$, where $H_{q}$ is the $q$th Hermite polynomial defined by
$$
H_q(x)=\frac{(-1)^q}{q!}e^{x^2/2}\frac{d^q}{dx^q}\big(e^{-x^2/2}\big).
$$
We denote by $\mathcal{H}_{0}$ the space of constant random variables. For
any $q\geq 1$, the mapping $I_{q}(h^{\otimes q})=q!H_{q}(X(h))$ can be extended a
linear isometry between $\EuFrak H^{\odot q}$
(equipped with the modified norm $\sqrt{q!}\left\| \cdot \right\| _{\EuFrak %
H^{\otimes q}}$) and $\mathcal{H}_{q}$ (equipped with the $L^2(\Omega)$ norm). For $q=0$, by convention $%
\mathcal{H}_{0}=\mathbb{R}$, and $I_{0}$ is the identity map.

\smallskip

It is well-known (Wiener chaos expansion) that $L^{2}(\Omega)$
can be decomposed into the infinite orthogonal sum of the spaces $\mathcal{H}%
_{q}$, that is: any square-integrable random variable $F\in L^{2}(\Omega)$ admits the following chaotic expansion:
\begin{equation}
F=\sum_{q=0}^{\infty }I_{q}(f_{q}),  \label{E}
\end{equation}%
where $f_{0}=E[F]$, and the $f_{q}\in \EuFrak H^{\odot q}$, $q\geq 1$, are
uniquely determined by $F$. For every $q\geq 0$, we denote by $J_{q}$ the
orthogonal projection operator on the $q$th Wiener chaos. In particular, if $%
F\in L^{2}(\Omega)$ is as in (\ref{E}), then $%
J_{q}F=I_{q}(f_{q})$ for every $q\geq 0$.

\subsection{Main results}\label{ss:main}

The main achievement of the present paper is the explicit estimate \eqref{e:mainresult}, appearing in the
forthcoming Theorem \ref{t:main}. Note that, in order to obtain more readable formulae, we only consider multiple
integrals with unit variance: one can deduce
bounds in the general case by a standard rescaling procedure.

\medskip

\noindent{\bf Remark on notation.} {\rm Fix integers $m,q\geq 1$. Given a smooth function $\varphi : \R^m \rightarrow \R$, we shall use the notation
\[
\|\varphi\|_q := \|\varphi\|_\infty + \sum \left\| \frac{\partial^k\varphi}{\partial x_{i_1}^{k_1} \cdots \partial x_{i_p}^{k_p}}  \right\|_\infty,
\]
where the sum runs over all $p=1,...,m$, all $\{i_1,...,i_p\}\subset \{1,...,m\}$, and all multi indices $(k_1,...,k_p) \in \{1,2,...\}^p$ verifying $k_1+\cdots +k_p := k\leq q$.
}

\medskip

\begin{thm}\label{t:main} Let $d\geq 2$ and let $ q_1\geq q_2 \geq \cdots \geq q_d\geq 1$ be fixed integers. There exists a constant $c$, uniquely depending on $d$ and $(q_1,...,q_d)$, verifying the following bound for any $d$-dimensional vector
\[
F = (F_1,...,F_d),
\]
such that $F_j = I_{q_j}(f_j)$, $f_j \in \HH^{\odot q_j}$ ($j=1,...,d$) and $E[F_j^2]=1$ for $j=1,...,d-1$, and
for any collection of smooth test functions $\psi_1,...,\psi_d:\R\to\R$,
\begin{eqnarray}\label{e:mainresult}
\left| E\left[ \prod_{j=1}^d\psi_j(F_j)\right] - \prod_{j=1}^d E[\psi_j(F_j)]\right|
&\leq& c\, \| \psi'_d\|_\infty  \prod_{j=1}^{d-1}\|\psi_j\|_{q_1}  \sum_{1\leq j<\ell \leq d} {\rm Cov}(F_j^2,F_\ell^2) .
\end{eqnarray}

\end{thm}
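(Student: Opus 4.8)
The plan is to prove the bound by a telescoping/hybrid argument on the number of factors, reducing everything to a one-dimensional "decoupling" estimate controlled by a single covariance term, and then iterating. Writing $\Psi_j(F_j) = \psi_j(F_j)$, I would first isolate the "last" variable $F_d$ (the one with the smallest chaos order, which is crucial for the integration-by-parts step) and write
\[
E\!\left[\prod_{j=1}^d \psi_j(F_j)\right] - E[\psi_d(F_d)]\,E\!\left[\prod_{j=1}^{d-1}\psi_j(F_j)\right]
= \operatorname{Cov}\!\left(\psi_d(F_d),\ \prod_{j=1}^{d-1}\psi_j(F_j)\right),
\]
and then recurse on the remaining product $E[\prod_{j=1}^{d-1}\psi_j(F_j)]$, which contributes $d-2$ further covariance terms of the same shape. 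So the whole problem reduces to bounding a single covariance $\operatorname{Cov}(\psi_\ell(F_\ell), G)$, where $G$ is a product of (uniformly bounded, together with their derivatives) smooth functions of the other components, by $\|\psi_\ell'\|_\infty \|G\|_\infty$-type factors times $\sum_{j\neq \ell}\operatorname{Cov}(F_j^2,F_\ell^2)$.

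The heart of the matter is this covariance estimate, and here I would use Malliavin calculus exactly as the introduction advertises. Using $F_\ell = \delta D(-L)^{-1}F_\ell$ or, more directly, the identity $\operatorname{Cov}(\psi_\ell(F_\ell),G) = E[\langle D(-L)^{-1}\psi_\ell(F_\ell), DG\rangle_\HH]$ together with $D\psi_\ell(F_\ell) = \psi_\ell'(F_\ell)DF_\ell$, one expresses the covariance in terms of $\psi_\ell'$ and the inner products $\langle DF_\ell, DG\rangle_\HH$. Since $G = \prod_{j}\psi_j(F_j)$, its derivative expands by Leibniz into a sum over which component is differentiated, producing terms containing $\langle DF_\ell, DF_j\rangle_\HH$ for $j\neq\ell$. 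Thus, after bounding $\psi_\ell'$ and all the $\psi_j$, $\psi_j'$ in sup-norm and applying Cauchy--Schwarz (and Meyer's inequality \eqref{e:meyer} to handle the operator $(-L)^{-1}$ and control the relevant Sobolev norms of $\psi_\ell(F_\ell)$ by $\|\psi_\ell\|_{q_1}$), everything is controlled by quantities of the form $E[|\langle DF_\ell, DF_j\rangle_\HH|^2]^{1/2}$ (times a possibly large but $n$-independent Sobolev norm of $F_j$, which is fine since $E[F_j^2]=1$ and $F_j$ lives in a fixed chaos, so all its moments and Malliavin norms are comparable).

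The final, and I expect most delicate, step is the identity/inequality linking the contraction-type quantity $E[\langle DF_j, DF_\ell\rangle_\HH^2]$ to the covariance $\operatorname{Cov}(F_j^2, F_\ell^2)$ when $F_j = I_{q_j}(f_j)$ and $F_\ell = I_{q_\ell}(f_\ell)$ sit in (possibly distinct) fixed chaoses. This is the essentially algebraic core: using $DF_j = q_j I_{q_j-1}(f_j)$ and the product formula for multiple integrals, one computes $\langle DF_j, DF_\ell\rangle_\HH$ as a sum of multiple integrals whose kernels are contractions $f_j \otimes_r f_\ell$, and separately $F_j^2, F_\ell^2$ expand the same way; then $E[\langle DF_j,DF_\ell\rangle_\HH^2]$ and $\operatorname{Cov}(F_j^2,F_\ell^2)$ turn out to be, up to combinatorial constants depending only on $q_j, q_\ell$, the same positive linear combinations of squared-contraction norms $\|f_j\otimes_r f_\ell\|^2$. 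In particular each nonzero contraction norm appears on both sides, so one gets a two-sided comparison; here I only need the direction $E[\langle DF_j,DF_\ell\rangle_\HH^2] \leq C_{q_j,q_\ell}\,\operatorname{Cov}(F_j^2,F_\ell^2)$. The main obstacle is bookkeeping the combinatorics so that no contraction term is "lost" — i.e. verifying that every contraction norm that shows up in $\langle DF_j, DF_\ell\rangle_\HH$ genuinely appears (with a strictly positive coefficient) in $\operatorname{Cov}(F_j^2, F_\ell^2)$ — and then assembling the constant $c$ by tracking the finitely many uses of Cauchy--Schwarz, Meyer's inequality, and hypercontractivity through the $d$-fold recursion. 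None of these steps is individually hard, but the hypercontractivity/Meyer bounds must be invoked with norms $\|\cdot\|_{q_1}$ (the largest order) so that a single exponent works uniformly, which is why the statement uses $\|\psi_j\|_{q_1}$ rather than $\|\psi_j\|_{q_j}$.
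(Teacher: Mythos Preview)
Your telescoping reduction and the idea to integrate by parts via $-DL^{-1}$ are exactly the right scaffolding, and indeed this is what the paper does. The gap is in the middle step, where you claim that ``after bounding $\psi_\ell'$ and all the $\psi_j,\psi_j'$ in sup-norm and applying Cauchy--Schwarz (and Meyer's inequality to handle $(-L)^{-1}$ \dots), everything is controlled by quantities of the form $E[|\langle DF_\ell, DF_j\rangle_\HH|^2]^{1/2}$.'' This does not go through as stated. After the integration by parts you have, for each $j$,
\[
E\Big[\big\langle (I-L)^{-1}\!\big[\psi_\ell'(F_\ell)\,DF_\ell\big],\, DF_j\big\rangle_{\HH}\cdot(\text{bounded factor})\Big],
\]
and the operator $(I-L)^{-1}$ does \emph{not} commute with multiplication by $\psi_\ell'(F_\ell)$, so there is no factor $\langle DF_\ell,DF_j\rangle_\HH$ to isolate. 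If you apply Cauchy--Schwarz directly to the inner product, you decouple $DF_\ell$ from $DF_j$ altogether and obtain a bound of order $\|DF_\ell\|_{L^2}\|DF_j\|_{L^2}\asymp 1$, not something controlled by ${\rm Cov}(F_\ell^2,F_j^2)$. Meyer's inequality \eqref{e:meyer} is a bound on $\delta^q$, not a device for removing $(I-L)^{-1}$ from inside an inner product, so it does not rescue this step. A related symptom: in your sketch only first derivatives of the $\psi_j$ ever appear, yet the statement requires control by $\|\psi_j\|_{q_1}$; this already signals that a further mechanism producing higher-order derivatives is missing.

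The paper resolves exactly this obstruction by a more elaborate expansion. Writing the inner product as $\int_A\big[(I-L)^{-1}\partial_j\varphi(F)\,I_{p_j-1}(f_j(\cdot,\theta))\big]\,I_{q-1}(g(\cdot,\theta))\,\mu(d\theta)$, it applies the Leibniz-type identity \eqref{t3} for $F\,\delta^{q-1}(u)$ to expand the product, uses the commutation $D^r(I-L)^{-1}=((r{+}1)I-L)^{-1}D^r$ to move derivatives past the resolvent, and \emph{only then} invokes Meyer's inequality \eqref{e:meyer} on the resulting Skorohod integrals $\delta^{q-1-r}(\cdots)$. This produces higher Malliavin derivatives $D^{\alpha+\eta}[\partial_j\varphi(F)]$ (handled by Fa\`a di Bruno, which is where the norms $\|\psi_j\|_{q_1}$ enter) multiplied by contractions $f_j\otimes_{r-\alpha+1}g$; the latter are then bounded by ${\rm Cov}(F_j^2,G^2)$ via the Nourdin--Rosi\'nski inequality. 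Your final paragraph about comparing $E[\langle DF_j,DF_\ell\rangle^2]$ with ${\rm Cov}(F_j^2,F_\ell^2)$ through contraction norms is correct in spirit, but the route to those contraction norms has to go through \eqref{t3} rather than a bare Cauchy--Schwarz.
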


When applied to sequences of multiple stochastic integrals, Theorem \ref{t:main} allows one to deduce the
following strong generalization of \cite[Theorem 3.4]{NouRos}.

\begin{thm}\label{t:nr+}
Let $d\geq 2$ and let $ q_1\geq q_2 \geq \cdots \geq q_d\geq 1$ be fixed integers. For every $n\geq 1$, let
$F_n = (F_{1,n},...,F_{d,n})$
be a $d$-dimensional random vector such that $F_{j,n} = I_{q_j}(f_{j,n})$, with
$f_{j,n} \in \HH^{\odot q_j}$ and $E[F_{j,n}^2]=1$ for all $1\leq j\leq d$ and $n\geq 1$.
Then, the following three conditions are equivalent, as $n\to\infty$:
\begin{enumerate}
\item[\rm (1)] ${\rm Cov}( F^2_{i,n}, F_{j,n}^2) \to 0$ for every $1\leq i\neq j\leq d$;
\item[\rm (2)] $\|f_{i,n}\otimes_r f_{j,n}\|\to 0$ for every $1\leq i\neq j\leq d$ and $1\leq r\leq q_i\wedge q_j$;
\item[\rm (3)] The random variables $F_{1,n},...,F_{d,n}$ are asymptotically independent, that is, for every collection of smooth bounded test functions $\psi_1,...,\psi_d : \R\to \R$,
\[
E\left[ \prod_{j=1}^d \psi_j(F_{j,n})\right] - \prod_{j=1}^d E[\psi_j(F_{j,n})]\longrightarrow  0.
\]
\end{enumerate}
\end{thm}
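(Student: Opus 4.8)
I would prove Theorem~\ref{t:nr+} by establishing the cycle of implications $(2)\Rightarrow(1)\Rightarrow(3)\Rightarrow(2)$, using Theorem~\ref{t:main} as the crucial link for the step $(1)\Rightarrow(3)$ and relying on standard product formulas for multiple integrals for the remaining two steps.

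For $(2)\Rightarrow(1)$, the idea is to express the covariance $\mathrm{Cov}(F_{i,n}^2, F_{j,n}^2)$ in terms of contraction norms. Expanding $F_{i,n}^2 = I_{q_i}(f_{i,n})^2$ via the multiplication formula $I_{q_i}(f)I_{q_i}(f) = \sum_{r=0}^{q_i}r!\binom{q_i}{r}^2 I_{2q_i-2r}(f\widetilde\otimes_r f)$, and similarly for $j$, one computes $E[F_{i,n}^2 F_{j,n}^2]$ by pairing chaoses of equal order; after subtracting $E[F_{i,n}^2]E[F_{j,n}^2]=1$, the surviving terms are finite linear combinations (with nonnegative combinatorial coefficients) of inner products of the form $\langle f_{i,n}\widetilde\otimes_r f_{i,n}, f_{j,n}\widetilde\otimes_s f_{j,n}\rangle$, each of which is bounded (Cauchy--Schwarz plus the elementary inequality $\|f\widetilde\otimes_r g\|\le\|f\otimes_r g\|$) by products of norms $\|f_{i,n}\otimes_r f_{j,n}\|$ for $1\le r\le q_i\wedge q_j$. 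Hence $(2)$ forces $\mathrm{Cov}(F_{i,n}^2, F_{j,n}^2)\to 0$. This is essentially the computation already present in \cite{NouRos}, but I only need one direction of it here.

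The step $(1)\Rightarrow(3)$ is immediate from Theorem~\ref{t:main}: since $E[F_{j,n}^2]=1$ for every $j$ (so in particular for $j=1,\dots,d-1$ as required, and the normalization of $F_{d,n}$ is harmless), inequality \eqref{e:mainresult} gives, for any fixed smooth bounded $\psi_1,\dots,\psi_d$,
\[
\left| E\!\left[\prod_{j=1}^d\psi_j(F_{j,n})\right] - \prod_{j=1}^d E[\psi_j(F_{j,n})]\right|
\le c\,\|\psi'_d\|_\infty\prod_{j=1}^{d-1}\|\psi_j\|_{q_1}\sum_{1\le j<\ell\le d}\mathrm{Cov}(F_{j,n}^2,F_{\ell,n}^2),
\]
and the right-hand side tends to $0$ under $(1)$ because the constant $c$ and the norms $\|\psi_j\|_{q_1}$ do not depend on $n$. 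One should order the indices so that the component with the smallest chaos order plays the role of the ``$d$-th'' slot, or simply note that the conclusion in $(3)$ is symmetric in the components while the bound can be applied after any relabeling; either way, the asymptotic factorization of the joint expectation follows.

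Finally, for $(3)\Rightarrow(2)$, I would argue by contradiction. Suppose $(2)$ fails: then along a subsequence some $\|f_{i,n}\otimes_r f_{j,n}\|$ stays bounded away from $0$, which by the covariance identity established in the first step means $\mathrm{Cov}(F_{i,n}^2, F_{j,n}^2)\not\to 0$ along that subsequence. But choosing the bounded smooth test functions $\psi_i, \psi_j$ to approximate $x\mapsto x^2$ suitably (e.g.\ a smooth truncation of the square, using the hypercontractivity of Wiener chaos to control the tails uniformly in $n$, since all relevant $L^p$ norms of $F_{i,n}, F_{j,n}$ are bounded by their $L^2$ norms up to a chaos-dependent constant), and all other $\psi_\ell\equiv 1$, the quantity in $(3)$ is, up to a vanishing error, exactly $\mathrm{Cov}(F_{i,n}^2, F_{j,n}^2)$; hence $(3)$ would force it to $0$, a contradiction.

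**Main obstacle.** The genuinely substantial ingredient is Theorem~\ref{t:main} itself, which is proved earlier; granting it, the only technical care needed here is in $(3)\Rightarrow(2)$, namely the approximation of the unbounded function $x^2$ by bounded smooth functions uniformly over $n$. This is where hypercontractivity on a fixed Wiener chaos (giving $\sup_n E[F_{j,n}^4]<\infty$, indeed all moments bounded) is essential: it guarantees that the truncation error $E[(F_{j,n}^2 - \psi(F_{j,n}))\cdot(\cdots)]$ can be made small uniformly in $n$ by taking the truncation level large. Everything else — the product formula bookkeeping in $(2)\Rightarrow(1)$, and the direct substitution in $(1)\Rightarrow(3)$ — is routine.
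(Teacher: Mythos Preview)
Your proposal is correct and matches the paper's proof: the paper cites \cite[Theorem~3.4]{NouRos} for the equivalence $(1)\Leftrightarrow(2)$, reads $(1)\Rightarrow(3)$ directly off inequality~\eqref{e:mainresult}, and obtains $(3)\Rightarrow(1)$ via hypercontractivity together with a smooth truncation of $x\mapsto x^2$, exactly as you outline. One minor correction to your bookkeeping: your $(3)\Rightarrow(2)$ argument actually passes through ``not~$(2)\Rightarrow$ not~$(1)$'', which is the contrapositive of $(1)\Rightarrow(2)$, so you use \emph{both} directions of the \cite{NouRos} equivalence rather than only the one you single out (and the step converting $\langle f_i\widetilde\otimes_r f_i,\, f_j\widetilde\otimes_s f_j\rangle$ into cross-contraction norms $\|f_i\otimes_a f_j\|$ is a Fubini-type switching identity, not Cauchy--Schwarz).
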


We can now state the announced extension
of Theorem \ref{t:intro} (see Section 1), in which the determinacy condition
for the limit random variables $U_j$ has been eventually removed.

\begin{thm}\label{utile}
Let $d\geq 2$ and let $ q_1\geq q_2 \geq \cdots \geq q_d\geq 1$ be fixed integers. For every $n\geq 1$, let
$F_n = (F_{1,n},...,F_{d,n})$
be a $d$-dimensional random vector such that $F_{j,n} = I_{q_j}(f_{j,n})$, with
$f_{j,n} \in \HH^{\odot q_j}$ and $E[F_{j,n}^2]=1$ for all $1\leq j\leq d$ and $n\geq 1$.
Let $U_1,\ldots,U_d$ be independent random variables such that
$F_{j,n}\overset{\rm law}{\to}U_j$ as $n\to\infty$ for every $1\leq j\leq d$.
Assume that either Condition {\rm (1)} or Condition {\rm (2)} of Theorem \ref{t:nr+} holds. Then, as $n\to\infty$,
\[
F_{n}\overset{\rm law}{\to}(U_1,\ldots,U_d).
\]
\end{thm}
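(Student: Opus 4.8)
The plan is to deduce Theorem \ref{utile} from Theorem \ref{t:nr+} together with a routine tightness-plus-characterization-of-limits argument. First I would observe that since each $F_{j,n}$ converges in law to $U_j$, the sequence $\{F_n\}_{n\geq 1}$ of $\R^d$-valued random vectors is tight: indeed componentwise tightness of the marginals implies tightness of the vectors in $\R^d$. Hence it suffices to show that every subsequential limit in law of $\{F_n\}$ coincides with the law of $(U_1,\dots,U_d)$ where the $U_j$ are independent. Passing to an arbitrary convergent subsequence, call its limit $(V_1,\dots,V_d)$; the goal is to identify this distribution.

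The key step is then to use the equivalence in Theorem \ref{t:nr+}. By hypothesis, either Condition (1) or Condition (2) holds, and by that theorem this forces Condition (3), i.e. asymptotic independence: for all smooth bounded $\psi_1,\dots,\psi_d$,
\[
E\Big[\prod_{j=1}^d\psi_j(F_{j,n})\Big]-\prod_{j=1}^dE[\psi_j(F_{j,n})]\longrightarrow 0.
\]
Along the chosen subsequence, $E[\prod_j\psi_j(F_{j,n})]\to E[\prod_j\psi_j(V_j)]$ and each $E[\psi_j(F_{j,n})]\to E[\psi_j(U_j)]$ (using $F_{j,n}\overset{\rm law}{\to}U_j$ and boundedness/continuity of $\psi_j$). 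Therefore $E[\prod_j\psi_j(V_j)]=\prod_jE[\psi_j(U_j)]$ for all smooth bounded $\psi_j$, and a standard approximation argument (smooth bounded functions are measure-determining on $\R^d$) yields that $(V_1,\dots,V_d)$ has the law of a vector of independent coordinates with $V_j\overset{\rm law}{=}U_j$. Since every subsequential limit is thus the law of $(U_1,\dots,U_d)$ with independent components, the whole sequence converges: $F_n\overset{\rm law}{\to}(U_1,\dots,U_d)$.

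I expect the genuinely substantive content to be entirely packaged inside Theorem \ref{t:nr+} (and hence Theorem \ref{t:main}): the passage $(1)\Rightarrow(3)$, resting on the inequality \eqref{e:mainresult}, is where the moment-determinacy assumption of Theorem \ref{t:intro} gets eliminated, because \eqref{e:mainresult} controls the dependence gap by $\mathrm{Cov}(F_j^2,F_\ell^2)$ with a constant independent of the test functions' sup-norms beyond the stated derivative norms. The only mild point of care in the present deduction is making sure the approximation from smooth bounded test functions to indicators of rectangles (or to the characteristic-function family) is legitimate; this is routine since one may mollify $\1_{(-\infty,a_j]}$ by smooth bounded functions and invoke dominated convergence, or equivalently work with $\psi_j(x)=e^{it_jx}$ after checking that \eqref{e:mainresult}-type bounds or a direct density argument apply. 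No serious obstacle arises here: this theorem is a clean corollary.

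\begin{proof}{}
Since $F_{j,n}\overset{\rm law}{\to}U_j$ for each $j$, the laws of $F_{1,n},\dots,F_{d,n}$ are each tight on $\R$, hence the laws of the vectors $F_n=(F_{1,n},\dots,F_{d,n})$ are tight on $\R^d$. Let $(V_1,\dots,V_d)$ be the limit in law of $F_{n_k}$ along some subsequence $\{n_k\}$. By assumption either Condition (1) or Condition (2) of Theorem \ref{t:nr+} holds, so by that theorem Condition (3) holds as well: for every collection of smooth bounded $\psi_1,\dots,\psi_d:\R\to\R$,
\[
E\Big[\prod_{j=1}^d\psi_j(F_{j,n})\Big]-\prod_{j=1}^dE[\psi_j(F_{j,n})]\longrightarrow 0,\qquad n\to\infty .
\]
Along $\{n_k\}$ we have $E[\prod_{j}\psi_j(F_{j,n_k})]\to E[\prod_{j}\psi_j(V_j)]$, and for each $j$, $E[\psi_j(F_{j,n_k})]\to E[\psi_j(U_j)]$ because $\psi_j$ is bounded and continuous and $F_{j,n_k}\overset{\rm law}{\to}U_j$. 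Passing to the limit in the displayed relation gives
\[
E\Big[\prod_{j=1}^d\psi_j(V_j)\Big]=\prod_{j=1}^dE[\psi_j(U_j)]
\]
for all smooth bounded $\psi_1,\dots,\psi_d$. Since such functions are measure-determining on $\R^d$ (approximate indicators of rectangles by smooth bounded functions and use dominated convergence), it follows that $(V_1,\dots,V_d)$ is distributed as a vector with independent components, the $j$th being distributed as $U_j$; that is, $(V_1,\dots,V_d)\overset{\rm law}{=}(U_1,\dots,U_d)$. As every subsequential limit of $\{F_n\}$ has this same law, we conclude $F_n\overset{\rm law}{\to}(U_1,\dots,U_d)$.
\end{proof}
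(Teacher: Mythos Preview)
Your proof is correct and follows essentially the same route as the paper's: pass to a subsequential limit $(V_1,\dots,V_d)$, use Theorem \ref{t:nr+} to obtain $E[\prod_j\psi_j(V_j)]=\prod_jE[\psi_j(U_j)]$ for smooth bounded test functions, identify the limit, and conclude by tightness. The only cosmetic difference is that the paper invokes $L^2(\Omega)$-boundedness for tightness whereas you use componentwise convergence, both of which are valid.
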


By considering linear combinations, one can also prove the following straightforward generalisations of Theorem \ref{t:nr+} and Theorem \ref{utile} (which are potentially useful for applications), where each component of the vector $F_n$ is replaced by a multidimensional object. The simple proofs are left to the reader.

\begin{prop}\label{p:nr+2}
Let $d\geq 2$, let $ q_1\geq q_2 \geq \cdots \geq q_d\geq 1$ and $m_1,...,m_d\geq 1$ be fixed integers, and set $M:=\sum_{j=1}^d m_j$.  For every $j=1,...,d$, let
\[
{\bf F}_{j,n} = (F_{j,n}^{(1)},...,F_{j,n}^{(m_j)}):= \big(I_{q_j}(f^{(1)}_{j,n}),...,I_{q_j}(f^{(m_j)}_{j,n})\big),
\]
where, for $\ell = 1,...,m_j$, $f^{(\ell)}_{j,n} \in \HH^{\odot q_j}$ and $E[(F_{j,n}^{(\ell)})^2] =q_j! \| f^{(\ell)}_{j,n}\|^2_{\HH^{\otimes q_j}}=1$.
Finally, for every $n\geq 1$, write ${\bf F}_n$ to indicate the $M$-dimensional vector
$({\bf F}_{1,n},...,{\bf F}_{d,n})$.
Then, the following three conditions are equivalent, as $n\to\infty$:
\begin{enumerate}
\item[\rm (1)] ${\rm Cov}\big( (F^{(\ell)}_{i,n})^2, (F^{(\ell')}_{j,n})^2\big) \to 0$ for every $1\leq i\neq j\leq d$, every $\ell=1,...,m_i$ and every $\ell' = 1,...,m_j$;
\item[\rm (2)] $\|f^{(\ell)}_{i,n}\otimes_r f^{(\ell')}_{j,n}\|\to 0$ for every $1\leq i\neq j\leq d$, for every $1\leq r\leq q_i\wedge q_j$, every $\ell=1,...,m_i$ and every $\ell' = 1,...,m_j$;
\item[\rm (3)] The random vectors ${\bf F}_{1,n},...,{\bf F}_{d,n}$ are asymptotically independent, that is: for every collection of smooth bounded test functions $\psi_j : \R^{m_j} \to \R$, $j=1,...,d$,
\[
E\left[ \prod_{j=1}^d \psi_j({\bf F}_{j,n})\right] - \prod_{j=1}^d E[\psi_j({\bf F}_{j,n})]\longrightarrow  0.
\]
\end{enumerate}
\end{prop}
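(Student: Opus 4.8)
The plan is to deduce every assertion of Proposition~\ref{p:nr+2} from the one-dimensional statements of Theorem~\ref{t:nr+}, following the parenthetical hint ``by considering linear combinations''; no new chaotic estimate is required.

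For $(1)\Leftrightarrow(2)$ I would fix $i\neq j$ and indices $\ell\leq m_i$, $\ell'\leq m_j$ and observe that the sequence of pairs $(F^{(\ell)}_{i,n},F^{(\ell')}_{j,n})_{n\geq 1}$ satisfies the hypotheses of Theorem~\ref{t:nr+} in dimension $d=2$ (both entries are unit-variance multiple integrals, of orders $q_i$ and $q_j$). The equivalence $(1)\Leftrightarrow(2)$ of that theorem then says exactly that ${\rm Cov}\big((F^{(\ell)}_{i,n})^2,(F^{(\ell')}_{j,n})^2\big)\to0$ iff $\|f^{(\ell)}_{i,n}\otimes_r f^{(\ell')}_{j,n}\|\to0$ for all admissible $r$; taking the conjunction over all quadruples $(i,j,\ell,\ell')$ gives the claim. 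For $(3)\Rightarrow(1)$ I would apply (3) with $\psi_i$ depending only on its $\ell$-th variable, $\psi_j$ only on its $\ell'$-th variable ($i\neq j$), and $\psi_k\equiv 1$ otherwise, obtaining the asymptotic independence of $F^{(\ell)}_{i,n}$ and $F^{(\ell')}_{j,n}$. Since these variables belong to fixed chaoses and have bounded variance, hypercontractivity makes $\{(F^{(\ell)}_{i,n})^2(F^{(\ell')}_{j,n})^2\}_n$ bounded in some $L^{1+\delta}$, hence uniformly integrable, and a routine truncation of $x\mapsto x^2$ by bounded smooth functions upgrades the asymptotic independence into ${\rm Cov}\big((F^{(\ell)}_{i,n})^2,(F^{(\ell')}_{j,n})^2\big)\to 0$.

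The real content is $(2)\Rightarrow(3)$. Since each family $\{{\bf F}_{j,n}\}_n$ is $L^2$-bounded, hence tight, a standard Fourier/tightness argument reduces (3) to the convergence of the joint characteristic functions to the product of the marginals, that is, to
\[
E\Big[e^{\,i\sum_{j=1}^d\langle t_j,{\bf F}_{j,n}\rangle}\Big]-\prod_{j=1}^d E\big[e^{\,i\langle t_j,{\bf F}_{j,n}\rangle}\big]\longrightarrow 0
\]
for every fixed $(t_1,\dots,t_d)$. Writing $G_{j,n}:=\langle t_j,{\bf F}_{j,n}\rangle=I_{q_j}(g_{j,n})$ with $g_{j,n}:=\sum_\ell t_j^{(\ell)}f^{(\ell)}_{j,n}\in\HH^{\odot q_j}$, this is the asymptotic independence of the scalar vector $(G_{1,n},\dots,G_{d,n})$, and, by bilinearity of the contractions, Condition (2) forces $\|g_{i,n}\otimes_r g_{j,n}\|\to0$ for all $i\neq j$ and $1\leq r\leq q_i\wedge q_j$. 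I would then argue by induction on $d$ (the case $d=1$ being vacuous): fixing an arbitrary subsequence, extract a further one along which the bounded quantities $\sigma_{j,n}^2:=E[G_{j,n}^2]$ converge, say to $\sigma_j^2\geq 0$. If some $\sigma_j^2=0$, then $G_{j,n}\to 0$ in $L^2$, that factor can be deleted from both terms of the difference up to an $o(1)$ error, and the inductive hypothesis applied to the remaining $d-1$ blocks concludes. If all $\sigma_j^2>0$, then for large $n$ the normalisations $\widehat G_{j,n}:=G_{j,n}/\sigma_{j,n}$ are unit-variance multiple integrals whose mixed contractions still tend to $0$ (divide by the convergent normalising constants), so Theorem~\ref{t:nr+}, implication $(2)\Rightarrow(3)$, yields the asymptotic independence of $(\widehat G_{1,n},\dots,\widehat G_{d,n})$; since $\sigma_{j,n}\to\sigma_j$ and $E|\widehat G_{j,n}|$ is bounded, one may replace $\sigma_{j,n}$ by $\sigma_j$ up to $o(1)$ and transfer the asymptotic independence to $(G_{1,n},\dots,G_{d,n})$. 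As the subsequence was arbitrary, the convergence holds along the whole sequence.

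I expect the only genuine obstacle to be exactly this last point: a linear combination $G_{j,n}$ of unit-variance chaotic variables need not remain non-degenerate, while Theorem~\ref{t:nr+} is available only for families normalised to unit variance. This forces the passage to subsequences together with the dimensional induction, a degenerate block being harmless since it converges to a constant and is therefore automatically asymptotically independent of the others. Once this is taken care of, everything is a routine transcription of the scalar results, which is presumably why the authors call the proof ``simple''.
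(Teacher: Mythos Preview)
Your argument is correct and follows exactly the route the paper suggests (``by considering linear combinations''): the paper leaves the proof to the reader, and you have supplied it, including the one genuinely nontrivial point---the possible degeneracy of $\langle t_j,\mathbf F_{j,n}\rangle$---which you dispose of cleanly via subsequences and induction on $d$. There is nothing to add.
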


\begin{prop}\label{utile2}
Let the notation and assumptions of Proposition \ref{p:nr+2} prevail, and assume that either Condition {\rm (1)} or Condition {\rm (2)} therein is satisfied. Consider a collection $({\bf U}_1,...,{\bf U}_{d})$ of independent random vectors such that, for $j=1,...,d$, ${\bf U}_j$ has dimension $m_j$. Then, if ${\bf F}_{j,n} $ converges in distribution to ${\bf U}_j$, as $n\to\infty$, one has also that
\[
{\bf F}_n \overset{\rm law}{\to}({\bf U}_1,\ldots,{\bf U}_d).
\]
\end{prop}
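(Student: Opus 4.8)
The plan is to derive Proposition \ref{utile2} from Proposition \ref{p:nr+2} exactly as Theorem \ref{utile} is derived from Theorem \ref{t:nr+}, namely by combining asymptotic independence with componentwise convergence in law. First I would record the elementary probabilistic fact: if $(\mathbf{G}_{1,n},\ldots,\mathbf{G}_{d,n})$ is a sequence of $M$-dimensional random vectors whose blocks $\mathbf{G}_{j,n}$ converge in law to $\mathbf{U}_j$ (with $\mathbf{U}_1,\ldots,\mathbf{U}_d$ independent), and if moreover the blocks are \emph{asymptotically independent} in the sense of condition (3) of Proposition \ref{p:nr+2}, then the full vector $(\mathbf{G}_{1,n},\ldots,\mathbf{G}_{d,n})$ converges in law to $(\mathbf{U}_1,\ldots,\mathbf{U}_d)$. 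To see this, fix a bounded Lipschitz test function $\Psi:\R^M\to\R$; by a standard density/approximation argument it suffices to treat $\Psi$ of the product form $\Psi(\mathbf{x}_1,\ldots,\mathbf{x}_d)=\prod_{j=1}^d\psi_j(\mathbf{x}_j)$ with each $\psi_j:\R^{m_j}\to\R$ smooth and bounded (finite linear combinations of such products are dense, in the topology of uniform convergence on compacts, among continuous bounded functions, and tightness of the sequence---which follows from the componentwise convergence---upgrades this to what is needed for weak convergence). For such a product test function, write
\[
E\Big[\prod_{j=1}^d\psi_j(\mathbf{G}_{j,n})\Big]-\prod_{j=1}^d E[\psi_j(\mathbf{U}_j)]
= \Big(E\Big[\prod_{j=1}^d\psi_j(\mathbf{G}_{j,n})\Big]-\prod_{j=1}^d E[\psi_j(\mathbf{G}_{j,n})]\Big)
+\Big(\prod_{j=1}^d E[\psi_j(\mathbf{G}_{j,n})]-\prod_{j=1}^d E[\psi_j(\mathbf{U}_j)]\Big).
\]
The first bracket tends to $0$ by the assumed asymptotic independence (condition (3)); the second tends to $0$ because each factor $E[\psi_j(\mathbf{G}_{j,n})]\to E[\psi_j(\mathbf{U}_j)]$ by the componentwise convergence in law, and a finite product of convergent bounded sequences converges to the product of the limits. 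Finally $\prod_j E[\psi_j(\mathbf{U}_j)]=E[\prod_j\psi_j(\mathbf{U}_j)]=E[\Psi(\mathbf{U}_1,\ldots,\mathbf{U}_d)]$ since the $\mathbf{U}_j$ are independent. Hence $E[\Psi(\mathbf{G}_{1,n},\ldots,\mathbf{G}_{d,n})]\to E[\Psi(\mathbf{U}_1,\ldots,\mathbf{U}_d)]$, which is the claimed convergence in law.

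With this fact in hand, the proof of Proposition \ref{utile2} is immediate: by hypothesis either Condition (1) or Condition (2) of Proposition \ref{p:nr+2} holds, and by Proposition \ref{p:nr+2} (the equivalence (1)$\Leftrightarrow$(2)$\Leftrightarrow$(3)) this forces Condition (3), i.e.\ the blocks $\mathbf{F}_{1,n},\ldots,\mathbf{F}_{d,n}$ are asymptotically independent. Since we also assume $\mathbf{F}_{j,n}\overset{\rm law}{\to}\mathbf{U}_j$ for each $j$ with the $\mathbf{U}_j$ independent, the general fact above applied to $\mathbf{G}_{j,n}=\mathbf{F}_{j,n}$ yields $\mathbf{F}_n\overset{\rm law}{\to}(\mathbf{U}_1,\ldots,\mathbf{U}_d)$.

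The only point requiring a little care---and the one I would expect to be the main (though still routine) obstacle---is the reduction from a general bounded continuous test function on $\R^M$ to product-form test functions. Here one invokes tightness of $\{\mathbf{F}_n\}_n$: each block $\mathbf{F}_{j,n}$ converges in law, hence is tight, and a finite Cartesian family of tight sequences is tight, so $\{\mathbf{F}_n\}_n$ is tight. Given tightness, to prove $\mathbf{F}_n\Rightarrow(\mathbf{U}_1,\ldots,\mathbf{U}_d)$ it is enough to identify the limit of every convergent subsequence, and for that it suffices to check convergence of $E[\Psi(\mathbf{F}_n)]$ for $\Psi$ ranging over a convergence-determining class; products $\prod_j\psi_j$ of smooth bounded functions (indeed, of the form $\prod_j e^{i\langle t_j,\mathbf{x}_j\rangle}$, i.e.\ characteristic functions) form such a class. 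Alternatively, and even more simply, one can bypass tightness altogether by working directly with characteristic functions: taking $\psi_j(\mathbf{x})=e^{i\langle t_j,\mathbf{x}\rangle}$ in the display above (these are bounded and smooth, and the asymptotic-independence statement (3) extends to complex-valued smooth bounded test functions by separating real and imaginary parts) shows that the characteristic function of $\mathbf{F}_n$ converges pointwise to $\prod_j E[e^{i\langle t_j,\mathbf{U}_j\rangle}]$, which is the characteristic function of $(\mathbf{U}_1,\ldots,\mathbf{U}_d)$; Lévy's continuity theorem then gives the conclusion. As the statement of the proposition itself notes, the proof is short, and we leave the remaining verifications to the reader.
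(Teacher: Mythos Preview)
Your proposal is correct and follows essentially the same route that the paper indicates (the paper leaves the proof to the reader, stating it is a straightforward generalisation of Theorem \ref{utile}): invoke Proposition \ref{p:nr+2} to pass from Condition (1) or (2) to asymptotic independence, combine with the assumed blockwise convergence in law, and conclude via tightness/subsequences or, equivalently, characteristic functions. This is precisely the template used in the paper's proof of Theorem \ref{utile}.
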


\bigskip

The plan of the paper is as follows. Section 2 contains some further preliminaries related to Gaussian analysis and Malliavin calculus. The proofs of our main results are gathered in Section 3.

\section{Further notation and results from Malliavin calculus} \label{s:prelim}

Let $\{e_{k},\,k\geq 1\}$ be a complete orthonormal system in $\EuFrak H$.
Given $f\in \EuFrak H^{\odot p}$, $g\in \EuFrak H^{\odot q}$ and $%
r\in\{0,\ldots ,p\wedge q\}$, the $r$th {\it contraction} of $f$ and $g$
is the element of $\EuFrak H^{\otimes (p+q-2r)}$ defined by
\begin{equation}
f\otimes _{r}g=\sum_{i_{1},\ldots ,i_{r}=1}^{\infty }\langle
f,e_{i_{1}}\otimes \ldots \otimes e_{i_{r}}\rangle _{\EuFrak H^{\otimes
r}}\otimes \langle g,e_{i_{1}}\otimes \ldots \otimes e_{i_{r}}\rangle _{%
\EuFrak H^{\otimes r}}.  \label{v2}
\end{equation}%
Notice that $f\otimes _{r}g$ is not necessarily symmetric. We denote its
symmetrization by $f\widetilde{\otimes }_{r}g\in \EuFrak H^{\odot (p+q-2r)} $%
. Moreover, $f\otimes _{0}g=f\otimes g$ equals the tensor product of $f$ and
$g$ while, for $p=q$, $f\otimes _{q}g=\langle f,g\rangle _{\EuFrak %
H^{\otimes q}}$. In the particular case $\EuFrak H=L^{2}(A,\mathcal{A},\mu )$, where $%
(A,\mathcal{A})$ is a measurable space and $\mu $ is a $\sigma $-finite and
non-atomic measure, one has that $\EuFrak H^{\odot q}$ can be identified with the space $L_{s}^{2}(A^{q},%
\mathcal{A}^{\otimes q},\mu ^{\otimes q})$ of $\mu^q$-almost everywhere symmetric and
square-integrable functions on $A^{q}$. Moreover, for every $f\in \EuFrak %
H^{\odot q}$, $I_{q}(f)$ coincides with the multiple Wiener-It\^{o} integral
of order $q$ of $f$ with respect to $X$
and (\ref{v2}) can be written as
\begin{eqnarray*}
&&(f\otimes _{r}g)(t_1,\ldots,t_{p+q-2r})
=\int_{A^{r}}f(t_{1},\ldots ,t_{p-r},s_{1},\ldots ,s_{r}) \\
&&\hskip2cm \times \ g(t_{p-r+1},\ldots ,t_{p+q-2r},s_{1},\ldots ,s_{r})d\mu
(s_{1})\ldots d\mu (s_{r}).
\end{eqnarray*}

\medskip

We will now introduce some basic elements of the Malliavin calculus with respect
to the isonormal Gaussian process $X$ (see again \cite{nourdinpeccatibook, nualartbook} for any unexplained notion or result). Let $\mathcal{S}$
be the set of all smooth and cylindrical random variables of
the form
\begin{equation}
F=g\left( X(\phi _{1}),\ldots ,X(\phi _{n})\right) ,  \label{v3}
\end{equation}%
where $n\geq 1$, $g:\mathbb{R}^{n}\rightarrow \mathbb{R}$ is a infinitely
differentiable function with compact support, and $\phi _{i}\in \EuFrak H$.
The Malliavin derivative of $F$ with respect to $X$ is the element of $%
L^{2}(\Omega ,\EuFrak H)$ defined as
\begin{equation*}
DF\;=\;\sum_{i=1}^{n}\frac{\partial g}{\partial x_{i}}\left( X(\phi
_{1}),\ldots ,X(\phi _{n})\right) \phi _{i}.
\end{equation*}
By iteration, one can
define the $q$th derivative $D^{q}F$ for every $q\geq 2$, which is an element of $L^{2}(\Omega ,%
\EuFrak H^{\odot q})$.

\smallskip

For $q\geq 1$ and $p\geq 1$, ${\mathbb{D}}^{q,p}$ denotes the closure of $%
\mathcal{S}$ with respect to the norm $\Vert \cdot \Vert_{\mathbb{D}^{q,p}}$, defined by
the relation
\begin{equation*}
\Vert F\Vert _{\mathbb{D}^{q,p}}^{p}\;=\;E\left[ |F|^{p}\right] +\sum_{i=1}^{q}E\left[
\Vert D^{i}F\Vert _{\EuFrak H^{\otimes i}}^{p}\right].
\end{equation*}
The {\it Malliavin derivative} $D$ verifies the following chain rule. If $%
\varphi :\mathbb{R}^{n}\rightarrow \mathbb{R}$ is continuously
differentiable with bounded partial derivatives and if $F=(F_{1},\ldots
,F_{n})$ is a vector of elements of ${\mathbb{D}}^{1,2}$, then $\varphi
(F)\in {\mathbb{D}}^{1,2}$ and
\begin{equation}\label{chain}
D\varphi (F)=\sum_{i=1}^{n}\frac{\partial \varphi }{\partial x_{i}}%
(F)DF_{i}.
\end{equation}
Note also that a random variable $F$ as in (\ref{E}) is in ${\mathbb{D}}%
^{1,2}$ if and only if
\begin{equation*}
\sum_{q=1}^{\infty }qq!\Vert f_{q}\Vert _{\EuFrak H^{\otimes q}}^{2}<\infty ,
\end{equation*}%
and, in this case, $E\left[ \Vert DF\Vert _{\EuFrak H}^{2}\right]
=\sum_{q\geq 1}qq!\Vert f_{q}\Vert _{\EuFrak H^{\otimes q}}^{2}$. If $\EuFrak H=%
L^{2}(A,\mathcal{A},\mu )$ (with $\mu $ non-atomic), then the
derivative of a random variable $F$ as in (\ref{E}) can be identified with
the element of $L^{2}(A\times \Omega )$ given by
\begin{equation}
D_{a}F=\sum_{q=1}^{\infty }qI_{q-1}\left( f_{q}(\cdot ,a)\right) ,\quad a\in
A.  \label{dtf}
\end{equation}

\smallskip

We denote by $\delta $ the adjoint of the operator $D$, also called the
{\it divergence operator}.
A random element $u\in L^{2}(\Omega ,\EuFrak %
H)$ belongs to the domain of $\delta $, noted $\mathrm{Dom}\,\delta $, if and
only if it verifies
\begin{equation*}
\big|E\big[\langle DF,u\rangle _{\EuFrak H}\big]\big|\leq c_{u}\,\sqrt{E[F^2]}
\end{equation*}%
for any $F\in \mathbb{D}^{1,2}$, where $c_{u}$ is a constant depending only
on $u$. If $u\in \mathrm{Dom}\,\delta $, then the random variable $\delta (u)$
is defined by the duality relationship (customarily called `integration by parts
formula'):
\begin{equation}
E[F\delta (u)]=E\big[\langle DF,u\rangle _{\EuFrak H}\big],  \label{ipp}
\end{equation}%
which holds for every $F\in {\mathbb{D}}^{1,2}$. The formula (\ref%
{ipp}) extends to the multiple Skorohod integral $\delta ^{q}$, and we
have
\begin{equation}
E\left[ F\delta ^{q}(u)\right] =E\left[ \left\langle D^{q}F,u\right\rangle _{%
\EuFrak H^{\otimes q}}\right]  \label{dual}
\end{equation}%
for any element $u$ in the domain of $\delta ^{q}$ and any random variable $%
F\in \mathbb{D}^{q,2}$. Moreover, $\delta ^{q}(h)=I_{q}(h)$ for any $h\in %
\EuFrak H^{\odot q}$.

\smallskip

The following property, corresponding to \cite[Lemma 2.1]{NoNu}, will be used in the paper.
Let $q\geq 1$ be an integer, suppose that  $F\in {\mathbb{D}}^{q,2}$, and
let $u$ be a symmetric element in $\mathrm{Dom}\,\delta ^{q}$. Assume that
$\left\langle D^{r}F,\delta ^{j}(u)\right\rangle _{\EuFrak H^{\otimes r}}\in
L^{2}(\Omega ,\EuFrak H^{\otimes q-r-j})$ for any $\ 0\leq r+j\leq q$. Then
$\left\langle D^{r}F,u\right\rangle _{\EuFrak H^{\otimes r}}$ belongs to the
domain of $\delta ^{q-r}$ for any  $r=0,\ldots ,q-1$, and we have
\begin{equation}
F\delta ^{q}(u)=\sum_{r=0}^{q}\binom{q}{r}\delta ^{q-r}\left( \left\langle
D^{r}F,u\right\rangle _{\EuFrak H^{\otimes r}}\right) .  \label{t3}
\end{equation}
(We use the convention that $\delta^0(v)=v$, $v\in\mathbb{R}$, and $D^0F=F$, $F\in L^2(\Omega)$.)

\smallskip

For any Hilbert space $V$, we denote by $\mathbb{D}^{k,p}(V)$ the
corresponding Sobolev space of $V$-valued random variables (see \cite[page 31]{nualartbook}%
). The operator $\delta^q$   is continuous from $\mathbb{D}^{k,p}(\EuFrak H^{\otimes q})$
to $\mathbb{D}^{k-q,p}$, for any $p>1$ and  any integers $k\ge q\ge 1$, and one has the estimate

\begin{equation}
\left\| \delta^q (u)\right\| _{\mathbb{D}^{k-q,p}}\leq c_{k,p}\left\| u\right\| _{\mathbb{D}%
^{k,p}(\EuFrak H^{\otimes q})}  \label{e:meyer}
\end{equation}
for all $u\in\mathbb{D}^{k,p}(\EuFrak H^{\otimes q})$, and some constant $c_{k,p}>0$.
These inequalities are direct consequences of the so-called {\it Meyer inequalities} (see %
\cite[Proposition 1.5.7]{nualartbook}).
 In particular, these estimates imply that
$\mathbb{D}^{q,2}(\EuFrak H^{\otimes q})\subset \mathrm{Dom}\,\delta ^{q}$ for any integer $q\ge 1$.

\smallskip

The operator $L$ is defined on the Wiener chaos expansion as
\begin{equation*}
L=\sum_{q=0}^{\infty }-qJ_{q},
\end{equation*}
and is called the {\it infinitesimal generator of the Ornstein-Uhlenbeck
semigroup}. The domain of this operator in $L^{2}(\Omega)$ is the set%
\begin{equation*}
\mathrm{Dom}L=\{F\in L^{2}(\Omega ):\sum_{q=1}^{\infty }q^{2}\left\|
J_{q}F\right\| _{L^{2}(\Omega )}^{2}<\infty \}=\mathbb{D}^{2,2}\text{.}
\end{equation*}%
There is an important relationship between the operators $D$, $\delta $ and $L$. A random variable $F$ belongs to the
domain of $L$ if and only if $F\in \mathrm{Dom}\left( \delta D\right) $
(i.e. $F\in {\mathbb{D}}^{1,2}$ and $DF\in \mathrm{Dom}\,\delta $), and in
this case
\begin{equation}
\delta DF=-LF.  \label{k1}
\end{equation}
We also define the operator $L^{-1}$, which is the
{\it pseudo-inverse} of $L$, as follows: for every $F\in L^2(\Omega)$ with zero mean,
we set $L^{-1}F$ $=$ $\sum_{q\geq 1}-\frac{1}{q} J_q(F)$. We note that
$L^{-1}$ is an operator with values in $\sk^{2,2}$ and that $LL^{-1}F=E-E[F]$ for any $F\in L^2(\Omega)$.

\section{Proofs of the results stated in Section \ref{ss:main}}

\subsection{Proof of Theorem \ref{t:main}}

\medskip

\noindent The proof of Theorem \ref{t:main} is based on a recursive application of the following
quantitative result,  whose proof has been inspired by the pioneering work of  \"Ust\"unel and Zakai on the characterization of the independence on Wiener chaos (see \cite{ustunelzakai}).

\begin{prop}\label{p:main}
Let $m\geq 1$ and $p_1,...,p_m,q$ be integers such that $p_j\geq q$  for every $j=1,...,m$. There exists a constant $c$, uniquely depending on $m$ and $p_1,...,p_m, q$, such that one has the bound
\[
|E[ \varphi(F) \psi(G)]- E[ \varphi(F)]E[ \psi(G)]| \le c\|\psi'\|_\infty \|\varphi\|_q \sum_{j=1}^m {\rm Cov}(F_j^2,G^2),
\]
for every vector $F= (F_1,...,F_m)$ such that $F_j=I_{p_j}(f_j)$, $f_j\in \HH^{\odot p_j}$
and $E[F_j^2]=1$ ($j=1,...,m$), for every random variable $G=I_q(g)$, $g\in \HH^{\odot q}$,  and for every pair of smooth test functions $\varphi : \R^m\to \R$ and $\psi : \R\to \R$.
\end{prop}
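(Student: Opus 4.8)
The plan is to exploit the identity $G^2 = I_q(g)^2$, expand it via the multiplication formula, and then use the integration-by-parts machinery to move the second-order information about $G$ onto $\varphi(F)$ through Malliavin derivatives, exactly in the spirit of the \"Ust\"unel--Zakai characterisation of independence. Concretely, I would start from $E[\varphi(F)\psi(G)] - E[\varphi(F)]E[\psi(G)]$ and write it as an integral over $t\in[0,1]$ of $\frac{d}{dt}E[\varphi(F)\psi(\sqrt{t}\,G + \sqrt{1-t}\,\widetilde G)]$, or alternatively use the interpolation $\psi(G) - E[\psi(G)]$ together with $G = \delta^q(g)$ (since $\delta^q(g) = I_q(g)$). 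The cleaner route is the second one: write $\psi(G) - E[\psi(G)] = \int_0^1 \psi'(uG)\, G\, du$ is not quite right because of the mean, so instead I would use $G = -L L^{-1} G = \delta D L^{-1} G$ and apply the integration-by-parts formula \eqref{ipp} to a well-chosen functional, producing a term $\langle D(\varphi(F)\psi(G)), -DL^{-1}G\rangle_\HH$; the chain rule \eqref{chain} then splits this into a piece with $\psi'(G)$ (which carries the factor $\|\psi'\|_\infty$) times $\langle DF_j, -DL^{-1}G\rangle_\HH$, and a piece with $\psi(G)$ times $\langle D\varphi(F), -DL^{-1}G\rangle_\HH$.

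The key point is that the "good" covariance-type quantities $\langle DF_j, -DL^{-1}G\rangle_\HH$ are, up to constants, exactly the objects controlled by $\mathrm{Cov}(F_j^2, G^2)$: indeed one has the classical identity $\mathrm{Cov}(F_j^2,G^2) = 2\|f_j\widetilde\otimes_? g\|^2 + \cdots$, and more usefully $E\big[\langle DF_j, -DL^{-1}G\rangle_\HH^2\big]$ is a finite linear combination of squared norms of contractions $\|f_j\otimes_r g\|^2$ with $1\le r\le q$, each of which is dominated by $\mathrm{Cov}(F_j^2,G^2)$. So after the first integration by parts I would iterate: the term involving $\langle D\varphi(F), -DL^{-1}G\rangle_\HH = \sum_j \partial_j\varphi(F)\langle DF_j, -DL^{-1}G\rangle_\HH$ already has the right structure, and I bound it directly by Cauchy--Schwarz, $\|\varphi\|_1 \le \|\varphi\|_q$, and $E[\langle DF_j,-DL^{-1}G\rangle_\HH^2]^{1/2} \le c\,\mathrm{Cov}(F_j^2,G^2)^{1/2}$. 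The remaining term with $\psi'(G)$ is the one that must be re-expanded: here I would again integrate by parts, this time on $\langle DF_j, -DL^{-1}G\rangle_\HH$ viewed as $\delta$ of something, repeatedly using \eqref{t3} and the Meyer inequality \eqref{e:meyer} to absorb derivatives of $\varphi$ (hence the appearance of $\|\varphi\|_q$ rather than $\|\varphi\|_1$), while each step peels off another factor that is an $L^2$-small contraction. The number of iterations is bounded in terms of $q$ and the $p_j$, which is why the constant $c$ depends only on $m, p_1,\dots,p_m,q$.

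The main obstacle I anticipate is bookkeeping the iterated integration-by-parts expansion so that (a) every term ultimately factors as (something bounded by $\|\psi'\|_\infty\|\varphi\|_q$) times (a contraction norm bounded by $\sum_j\mathrm{Cov}(F_j^2,G^2)^{1/2}$ or its square), and (b) one never needs more than a fixed number $k$ of Malliavin derivatives of $\varphi$, so that $\|\varphi\|_q$ with the stated $q$ genuinely suffices and the Meyer constants $c_{k,p}$ stay under control. A secondary technical point is upgrading the estimate $E[\langle DF_j,-DL^{-1}G\rangle_\HH^2]\le c\,\mathrm{Cov}(F_j^2,G^2)$: this is a now-standard computation in this literature (it appears e.g. in the fourth-moment/multivariate CLT papers), using that $\langle DF_j,-DL^{-1}G\rangle_\HH = \tfrac{1}{q}\langle DF_j,DG\rangle_\HH$ when $p_j=q$ and a weighted version otherwise, combined with the product formula for multiple integrals and the fact that $\mathrm{Cov}(F_j^2,G^2)$ equals a nonnegative combination of the same squared contraction norms. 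Once these two lemmas are in place, the proof is a finite recursion, and assembling the constants gives the claimed bound.
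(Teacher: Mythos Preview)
Your proposal names the right ingredients (integration by parts, Meyer inequalities, the formula \eqref{t3}) but the opening move is garbled and the overall scheme would not close as written.

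First, writing $G=\delta(-DL^{-1}G)$ and integrating by parts against $\varphi(F)\psi(G)$ computes $E[\varphi(F)\psi(G)\,G]$, not the covariance $E[\varphi(F)\psi(G)]-E[\varphi(F)]E[\psi(G)]$; that is precisely why you find yourself with a spurious ``$\psi(G)$ term'' that has to be re-expanded. The correct starting point (the paper's) is the dual one: apply $LL^{-1}$ to $\varphi(F)$, not to $G$ or $\psi(G)$, i.e.
\[
\varphi(F)-E[\varphi(F)]=\sum_{j=1}^m \delta\big((I-L)^{-1}\partial_j\varphi(F)\,DF_j\big),
\]
and integrate by parts against $\psi(G)$. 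This produces, in a single step,
\[
\sum_{j=1}^m E\big[\psi'(G)\,\langle (I-L)^{-1}\partial_j\varphi(F)\,DF_j,\,DG\rangle_\HH\big],
\]
with $\psi'(G)$ sitting \emph{outside} any operator, so $\|\psi'\|_\infty$ comes out immediately and there is nothing to iterate on the $\psi$ side. (If instead one integrates by parts on $\psi(G)$, the factor $\psi'(G)$ ends up trapped inside $(I-L)^{-1}$, and cannot be extracted as a sup norm.)

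Second, the remaining expectation $E\big[|\langle (I-L)^{-1}\partial_j\varphi(F)\,DF_j,\,DG\rangle_\HH|\big]$ is not handled by ``iterated integration by parts peeling off factors''. One writes $D_\theta G=qI_{q-1}(g(\cdot,\theta))=q\,\delta^{q-1}(g(\cdot,\theta))$ and applies the product formula \eqref{t3} \emph{once}, expanding
\[
\big[(I-L)^{-1}\partial_j\varphi(F)\,I_{p_j-1}(f_j(\cdot,\theta))\big]\,I_{q-1}(g(\cdot,\theta))
\]
as a finite sum of terms $\delta^{q-1-r}(\cdots)$. Meyer's inequalities \eqref{e:meyer} then bound each $\delta^{q-1-r}$ in $L^1$ by Sobolev norms of its argument; the Leibniz rule and Fa\`a di Bruno's formula control $\|D^\gamma[\partial_j\varphi(F)]\|$ by $\|\varphi\|_q$ times products of $\|D^iF_\ell\|$, which are bounded because all $\mathbb{D}^{k,p}$-norms are equivalent on a fixed chaos and $E[F_\ell^2]=1$. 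What survives after Cauchy--Schwarz and hypercontractivity is a \emph{single} factor $\|f_j\otimes_{r-\alpha+1}g\|$ (not its square), and this is what the inequality $\max_r\|f_j\otimes_r g\|\le\mathrm{Cov}(F_j^2,G^2)$ from \cite{NouRos} turns into the stated bound. Your Cauchy--Schwarz on $E[\langle DF_j,-DL^{-1}G\rangle_\HH^2]$ would only yield $\mathrm{Cov}(F_j^2,G^2)^{1/2}$, which is strictly weaker than the proposition as stated.
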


\begin{proof}

Throughout the proof, the symbol $c$ will denote a positive finite constant uniquely depending on $m$ and
$p_1,...,p_m, q$, whose value may change from line to line. Using the chain rule (\ref{chain}) together with the relation $-DL^{-1}= (I-L)^{-1}D$ (see, e.g., \cite{nualartzakai}), one has
\[
 \varphi(F)- E[\varphi(F)] = LL^{-1}\varphi(F)= -\delta(DL^{-1} \varphi(F))=\sum_{j=1}^m \delta((I-L)^{-1}
 \partial_j\varphi(F)DF_j),
 \]
from which one deduces that
\begin{eqnarray*}
&&E[ \varphi(F) \psi(G)]- E[ \varphi(F)]E[ \psi(G)]  =\sum_{j=1}^mE[  \langle (I-L)^{-1} \partial_j\varphi(F) DF_j , DG \rangle_\HH \psi'(G)] \\
& \le& \|\psi'\|_\infty \sum_{j=1}^m E\big[| \langle (I-L)^{-1} \partial_j\varphi(F) DF_j , DG \rangle_\HH|\big].
\end{eqnarray*}
We shall now fix $j=1,...,m$, and consider separately every addend appearing in the previous sum.
As it is standard, without loss of generality, we can assume that the underlying Hilbert space $\HH$ is of
the form $L^2(A, \mathcal{A}, \mu)$, where $\mu$ is a $\sigma$-finite measure without atoms. It follows that
\begin{equation}  \label{eq1}
\langle (I-L)^{-1} \partial_j\varphi(F) DF_j , DG \rangle_\HH
=p_jq\int_A  [(I-L)^{-1} \partial_j\varphi(F) I_{p_j-1}(f_j(\cdot,\theta)) ] I_{q-1}(g(\cdot, \theta)) \mu(d\theta).
\end{equation}
Now we apply the formula (\ref{t3})
to $u=g(\cdot, \theta) $ and $F= (I-L)^{-1} \partial_j\varphi(F) I_{p_j-1}(f_j(\cdot,\theta))$ and we obtain, using $D^r(I-L)^{-1}=((r+1)I-L)^{-1} D^r$ as well (see, e.g., \cite{nualartzakai}),
\begin{eqnarray}
&& [(I-L)^{-1} \partial_j\varphi(F) I_{p_j-1}(f_j(\cdot,\theta)) ] I_{q-1}(g(\cdot, \theta))  \label{eq2}\\
&=&\sum_{r=0}^{q-1} {q-1\choose r }\delta^{q-1-r}\left(
\left \langle g(\cdot,\theta) ,D^r[(I-L)^{-1} \partial_j\varphi(F) I_{p_j-1}
(f_j(\cdot,\theta)) ] \right\rangle _{\HH^{\otimes r}} \right)\notag \\
& =&\sum_{r=0}^{q-1} {q-1\choose r }\delta^{q-1-r}\left(
\left \langle g(\cdot,\theta) ,((r+1)I-L)^{-1} D^r[\partial_j \varphi(F) I_{p_j-1}
(f_j(\cdot,\theta)) ] \right\rangle _{\HH^{\otimes r}}\right). \notag
\end{eqnarray}
Now, substituting (\ref{eq2}) into (\ref{eq1}) yields
\begin{eqnarray*}
&&\langle (I-L)^{-1} \partial_j\varphi(F) DF_j , DG \rangle_\HH
=p_jq\sum_{r=0}^{q-1} {q-1\choose r }\\
&& \times \delta^{q-1-r}
\left(  \int_{A^{r+1}} g(\cdot,  \mathbf{s}^{r+1} ) ((r+1)I-L)^{-1} D_{s_1, \dots, s_r}^r
[\partial_j\varphi(F) I_{p_j-1}(
f_j(\cdot, s_{r+1})) ] \mu(d\mathbf{s}^{r+1}) \right),
\end{eqnarray*}
where $ \mathbf{s}^{r+1} =(s_1, \dots, s_{r+1})$.
We have, by the Leibniz rule,
\begin{eqnarray*}
&&D_{s_1, \dots, s_r}^r[\partial_j\varphi(F) I_{p_j-1}(f_j(\cdot, s_{r+1})) ]  = \sum_{\alpha=0}^r
{r\choose \alpha}D_{s_1 ,\dots, s_\alpha} ^\alpha [\partial_j\varphi(F) ] D_{s_{\alpha+1}, \dots, s_r}^{r-\alpha}
[I_{p_j-1} (f_j(\cdot, s_{r+1})) ] \\
& =&\sum_{\alpha=0}^r  {r\choose \alpha}   \frac{(p_j-1)!}{ (p_j-r+\alpha-1)!} D_{s_1 ,\dots, s_\alpha} ^\alpha [\partial_j\varphi(F) ]
I_{p_j-r+\alpha-1} (f_j(\cdot, s_{\alpha+1} ,\dots, s_{r+1})).
\end{eqnarray*}
Fix $0\le r \le q-1$ and $0\le \alpha \le r$. It suffices to estimate the following expectation
\begin{eqnarray}
&&E \Bigg| \delta^{q-1-r} \Bigg(\int_{A^{r+1}} g(\cdot, \mathbf{s}^{\alpha} ,\mathbf{t}^{r-\alpha+1}) \label{star} \\
&& \times ((r+1)I-L)^{-1}D_{\mathbf{s}^\alpha} ^\alpha [\partial_j\varphi(F)  ]
I_{p_j-r+\alpha-1} (f_j(\cdot, \mathbf{t}^{r-\alpha+1}))\mu(d\mathbf{s}^{\alpha}) \mu(d\mathbf{t}^{r-\alpha+1}) \Bigg)
\Bigg|.\notag
\end{eqnarray}
Note that, in the previous formula, the symbol `$\cdot$' inside the argument of the kernel $g$ represents
variables that are integrated with respect to the multiple Skorohod integral $\delta^{q-1-r}$, whereas
the `$\cdot$' inside the argument of $f_j$ stands for variables that are integrated with respect to the multiple
Wiener-It\^o integral $I_{p_j-r+\alpha-1}$.
By Meyer's inequalities \eqref{e:meyer}, we can estimate the expectation (\ref{star}), up to a universal constant, by the sum over $0\leq \beta\leq q-r-1$ of the quantities
\begin{eqnarray*}
&&\Bigg(   \int_{A^{q-r-1 +\beta}} E \Bigg( \int_{A^{r+1}} g( \mathbf{v}^{q-r-1},\mathbf{s}^{\alpha},
\mathbf{t}^{r-\alpha+1} )
D^{\beta}_{\mathbf{u}^{\beta}} \big\{ ((r+1)I-L)^{-1}D_{\mathbf{s}^\alpha} ^\alpha [\partial_j\varphi(F) ]  \\
&&\times I_{p_j-r+\alpha-1}(f_j(\cdot, \mathbf{t}^{r-\alpha+1}))\big\}\mu(d\mathbf{s}^{\alpha}) \mu(d\mathbf{t}^{r-\alpha+1}) \Bigg)^2  \mu(d \mathbf{v}^{q-r-1})\mu(d\mathbf{u}^{\beta})
\Bigg)^{\frac12}  \\
&&= \Bigg(   \int_{A^{q-r-1+\beta}} E \Bigg( ((\beta+r+1)I-L)^{-1} \int_{A^{r+1}}
g( \mathbf{v}^{q-r-1},\mathbf{s}^{\alpha},
\mathbf{t}^{r-\alpha+1} )
 D^{\beta}_{\mathbf{u}^{\beta}} \big\{ D_{\mathbf{s}^\alpha} ^\alpha[\partial_j\varphi(F) ]  \\
&&\times I_{p_j-r+\alpha-1} (f_j(\cdot, \mathbf{t}^{r-\alpha+1}))\big\}\mu(d\mathbf{s}^{\alpha}) \mu(d\mathbf{t}^{r-\alpha+1})) \Bigg)^2  \mu(d \mathbf{v}^{q-r-1})\mu(d\mathbf{u}^{\beta})
\Bigg)^{\frac12}  \\
&&\le c \Bigg(   \int_{A^{q-r-1+\beta}} E \Bigg(  \int_{A^{r+1}} g( \mathbf{v}^{q-r-1},\mathbf{s}^{\alpha},
\mathbf{t}^{r-\alpha+1} )
 D^{\beta}_{\mathbf{u}^{\beta}} \big\{ D_{\mathbf{s}^\alpha} ^\alpha[\partial_j\varphi(F) ]  \\
&&\times I_{p_j-r+\alpha-1} (f_j(\cdot, \mathbf{t}^{r-\alpha+1}))\big\}\mu(d\mathbf{s}^{\alpha}) \mu(d\mathbf{t}^{r-\alpha+1})) \Bigg)^2  \mu(d \mathbf{v}^{q-r-1})\mu(d\mathbf{u}^{\beta})
\Bigg)^{\frac12} .
\end{eqnarray*}
Thanks to the Leibniz formula, the last bound implies that we need to estimate, for any $0\le \eta\le\beta \le q-r-1$, the following quantity
\begin{eqnarray*}
&&\Bigg(   \int_{A^{q-r-1+\beta}} E \Bigg(  \int_{A^{r+1}}g( \mathbf{v}^{q-r-1},\mathbf{s}^{\alpha},
\mathbf{t}^{r-\alpha+1} )
 D^{\alpha +\eta}_{ \mathbf{s}^\alpha, \mathbf{w}^\eta }[\partial_j\varphi(F) ]  \\
&&\times I_{p_j-r+\alpha-1+\eta-\beta} (f_j (\cdot, \mathbf{t}^{r-\alpha+1}, \mathbf{y}^{\beta-\eta}))\mu(d\mathbf{s}^{\alpha})\mu(d\mathbf{t}^{r-\alpha+1})  \Bigg)^2  \mu(d \mathbf{v}^{q-r-1})  \mu(d\mathbf{w}^\eta)   \mu(d\mathbf{y}^{\beta-\eta})\Bigg)^{\frac12}.
\end{eqnarray*}
We can rewrite this quantity as
\begin{eqnarray*}
&&\Bigg(\int_{A^{q-r-1-\beta}}  E   \left( \left(I_{p_j-r+\alpha-1+\eta-\beta}(f_j(\cdot, \mathbf{y}^{\beta-\eta})) \otimes_{r-\alpha+1} g(\cdot, \mathbf{v}^{q-r-1}) \right) \otimes_\alpha D^{\alpha +\eta}[\partial_j\varphi(F) ]  (\mathbf{w}^\eta) \right)^2\\
&&\times \mu(d \mathbf{y}^{\beta-\eta}) \mu( d\mathbf{w}^\eta) \mu (d \mathbf{v}^{q-r-1}) \Bigg) ^{\frac 12}.
\end{eqnarray*}
Applying the Cauchy-Schwarz inequality yields that such a quantity is bounded by
\begin{eqnarray*}
&&\left( E \left[\|I_{p_j-r+\alpha-1+\eta-\beta}(f_j)\otimes_{r-\alpha+1} g\|^2  \|D^{\alpha +\eta} [\partial_j\varphi(F)] \|^2 \right] \right)^{\frac 12} \\
&\le& \left(    E\|I_{p_j-r+\alpha-1+\eta-\beta}(f_j)\otimes_{r-\alpha+1} g\|^4 \right)  ^{\frac 14}\left( E \|D^{\alpha +\eta} [\partial_j\varphi(F)] \|^4 \right)  ^{\frac 14}.
\end{eqnarray*}
Set $\gamma =\alpha + \eta$. Applying the generalized Fa\'a di Bruno's formula (see, e.g., \cite{Mis}) we deduce that
\[
D^\gamma [\partial_j\varphi(F) ] = \sum   \frac {\gamma !} { \prod_{i=1}^\gamma i!^{k_i}  \prod_{i=1}^{\gamma} \prod_{j=1}^m q_{ij}!}   \frac{\partial^{k}\partial_j\varphi(F)}{\partial x^{p_1}_1\cdots \partial x_m^{p_m}}   \prod_{i=1}^{\gamma \wedge p^*} (D^i F_1) ^{\otimes q_{i1}} \otimes \cdots \otimes (D^i F_m) ^{\otimes q_{im}},
\]
where $p^* =  \min\{p_1,...,p_m\}$, and the sum runs over all nonnegative integer solutions of the system of $\gamma+1$ equations
\begin{eqnarray*}
&&k_1+2k_2+\cdots +\gamma k_\gamma = \gamma, \\
&&q_{11} + q_{12}+\cdots +q_{1m} = k_1,\\
&&q_{21} + q_{22}+\cdots +q_{2m} = k_2,\\
&&\cdots\cdots \\
&&q_{\gamma1} + q_{\gamma2}+\cdots +q_{\gamma m} = k_\gamma,
\end{eqnarray*}
and we have moreover set $p_j = q_{1j}+\cdots +q_{\gamma j}$, $j=1,...,r$, and
$k = p_1+\cdots +p_m = k_1+\cdots +k_\gamma$.  This expression yields immediately that
\[
\|D^\gamma [\partial_j\varphi(F) ]\| \le c \|\varphi\|_q \sum  \prod_{i=1}^{\gamma \wedge p^*}
\| D^i F_1 \|^{ q_{i1}}  \cdots  \|D^i F_m\|^{ q_{im}}
\]
and using the facts that all $\mathbb{D}^{k,p}$ norms ($k,p\geq 1$) are equivalent on a fixed Wiener chaos and that the elements of the vector $F$ have unit variance by assumption, we infer that
\[
\left(E \|D^{\gamma} [\partial_j\varphi(F)] \|^4 \right)  ^{\frac 14} \le c\|\varphi\|_q.
\]
On the other hand, using hypercontractivity one has that
\[
\left( E\|I_{p_j-r+\alpha-1+\eta-\beta}(f_j)\otimes_{r-\alpha+1} g\|^4 \right) ^{\frac 14}
\le
c\left( E\|I_{p_j-r+\alpha-1+\eta-\beta}(f_j)\otimes_{r-\alpha+1} g\|^2 \right) ^{\frac 12}.
\]
Since
\[
  E\|I_{p_j-r+\alpha-1+\eta- \beta}(f_j)\otimes_{r-\alpha+1} g\|^2
 =(p_j-r+\alpha-1+\eta- \beta)! \| f_j\otimes_{r-\alpha+1} g \|^2,
 \]
 and
 \[
 \max_{1\le r \le q} \| f_j \otimes_{r} g \|\leq {\rm Cov}(F_j^2,G^2)\quad\mbox{(see, e.g., \cite[inequality (3.26)]{NouRos})},
 \]
 we finally obtain
 \[
 E| \langle (I-L)^{-1} \partial_j\varphi(F) DF_j , DG \rangle_\HH|\le c
  \|\varphi\|_q {\rm Cov}(F_j^2,G^2),
 \]
thus concluding the proof.
\end{proof}

\medskip

\noindent{\bf Proof of Theorem \ref{t:main}}. Just observe that

\begin{eqnarray*}
 && \left| E\left[\prod_{j=1}^d \psi_j(F_j)\right] - \prod_{j=1}^d E[\psi_j(F_j)]\right|\\
 &\leq& \sum_{j=2}^d \Big| E[\psi_1(F_1)\cdots \psi_{j-1}(F_{j-1})]\,  E[\psi_j(F_j)]\,\cdots\, E[\psi_d(F_d)]  \\
 &&\quad\quad\quad\quad \quad- E[\psi_1(F_1)\cdots \psi_{j}(F_{j})]\, E[\psi_{j+1}(F_{j+1})]\,\cdots\, E[\psi_d(F_d )]\Big| ,
\end{eqnarray*}
so that the conclusion is achieved (after some routine computations) by applying Proposition \ref{p:main}
(in the case $m=j-1$, $p_i = q_i$, $i=1,...,j-1$, and $q = q_j$) to each summand on the right-hand side
of the previous estimate. \qed

\medskip

\subsection{Proof of Theorem \ref{t:nr+}}

 The equivalence between (1) and (2) follows from \cite[Theorem 3.4]{NouRos}.
That (3) implies (1) would have been immediate if the square function $x\mapsto x^2$ were bounded.
To overcome this slight difficulty, it suffices to combine the hypercontractivity property
of chaotic random variables (from which it follows that our sequence $(F_n)$ is bounded in $L^p(\Omega)$ for any $p\geq 1$)
with a standard approximation argument.
Finally, the implication $(1)\Rightarrow(3)$ is a direct consequence of (\ref{e:mainresult}).

\subsection{Proof of Theorem \ref{utile}}

Assume that there exists a subsequence of $\{F_n\}$ converging in distribution to some limit $(V_1,\ldots,V_d)$.
For any collection of smooth test functions $\psi_1,...,\psi_d:\R\to\R$, one can then write
\begin{equation}\label{eq112}
E\left[ \prod_{j=1}^d\psi_j(V_j)\right]
=
\prod_{j=1}^dE\left[ \psi_j(V_j)\right]
=
\prod_{j=1}^dE\left[ \psi_j(U_j)\right]
=
E\left[ \prod_{j=1}^d\psi_j(U_j)\right].
\end{equation}
Indeed, the first equality in (\ref{eq112}) is a direct consequence of Theorem \ref{t:nr+}, the second one follows from the fact that $V_j\overset{\rm law}{=}U_j$ for any $j$ by assumption, and the last one follows from
the independence of the $U_j$.
Thus, we deduce from (\ref{eq112}) that $(U_1,\ldots,U_d)$ is the only possible limit in law
for any converging subsequence extracted from $\{F_n\}$.
Since the sequence $\{F_n\}$ is tight (indeed, it is bounded in $L^{2}(\Omega)$), one deduces that $F_n\overset{\rm law}{\to}(U_1,\ldots,U_d)$,
which completes the proof of Theorem \ref{utile}.

\end{document}